\providecommand{\tabularnewline}{\\}
\numberwithin{equation}{section}
\numberwithin{figure}{section}
\title{}
\def\qed{\quad\vrule height4.17pt width4.17pt depth0pt}
\newtheorem{theorem}{Theorem}
\theoremstyle{plain}
\newtheorem{corollary}{Corollary}
\newtheorem{definition}{Definition}
\newtheorem{example}{Example}
\numberwithin{equation}{section}
\begin{document}

\title{Some general properties of LAD and\linebreak{}
RAD AG-groupoids}

\author{M. $\mbox{Rasha\ensuremath{d^{A}}}$}

\author{I. $\mbox{Ahma\ensuremath{d^{A,*}}}$}

\email{rashad@uom.edu.pk}

\email{iahmaad@hotmail.com}

\address{A: Department of Mathematics, University of Malakand, Chakdara, Pakistan.}

\author{M. Shah}

\email{shahmaths\_problem@hotmail.com}

\address{Government Post Graduate College Mardan, Pakistan.}

\keywords{AG-groupoid, locally associative AG-groupoid, paramedial AG-groupoid,
LAD-groupoid, AG$^{**}$-groupoid, congruences.\\
{*}Corresponding author}

\maketitle
\noindent \begin{center}
2010 Mathematics Subject Classifi{}cation: \textbf{20N02}
\par\end{center}
\begin{abstract}
A groupoid that satisfies the left invertive law: $ab\cdot c=cb\cdot a$
is called an AG-groupoid. We extend the concept of left abelian distributive
groupoid (LAD) and right abelian distributive groupoid (RAD) to introduce
new subclasses of AG-groupoid, left abelian distributive AG-groupoid
and right abelian distributive AG-groupoid. We give their enumeration
up to order 6 and find some basic relations of these new classes with
other known subclasses of AG-groupoids and other relevant algebraic
structures. We establish a method to test an arbitrary AG-groupoid
for these classes. 
\end{abstract}

\section{\textbf{Introduction}}

An AG-groupoid is a generalization of commutative semigroup, in which the left invertive law: $(ab)c=(cb)a$ holds \cite{KM}. A groupoid $S$ is called left (resp. right) abelian distributive groupoids if
it satisfies $a\cdot bc=ab\cdot ca\,(\textrm{resp. }ab\cdot c=ca\cdot bc)$
\cite{DK}. Here we extend these concepts to left (right) abelian
distributive AG-groupoids. We prove their existence by providing non-associative
examples of various finite orders. We also prove their relations with
some of the already known subclasses\cite{DSV,MIM,key-7,RAAS,IMM,leftT-2-1}
of AG-groupoid and with other algebraic structures. AG-groupoids are
enumerated up to order $6$ in \cite{DSV} using GAP~\cite{GAP}
by one of our author. We also use the same technique to enumerate
our new subclasses of AG-groupoids. Table 1, contains the counting
of these new subclasses of AG-groupoids. An AG-groupoid S is called
monoid if it contains a unique left identiy. Every monoid satisfies
the paramedial property. It is also interesting to note that if S
possesses the right identity element then it becomes a semigroup.
$S$ is called medial if it satisfies the identity, $ab.cd=ac.bd$.
It is is easy to prove that every AG-groupoid is medial. AG-groupoids
have a variety of applications in flocks and geometry\cite{KM,SM}.
In the following we give some defintions of AG-groupoids with their
identies that will be used in the rest of this article.

An Abel-Grassmann groupoid $S$ \cite{KM}, abbreviated as an AG-groupoid,
is a groupoid that satisfies the left invertive law, 
\begin{eqnarray}
ab\cdot c & = & cb\cdot a\,\forall a,b,c\in S\label{eq:a}
\end{eqnarray}
An AG-groupoid $S$ is called ---

\noindent \begin{enumerate}[(i)]

\item --- right commutative AG-groupoid, if $a\cdot bc=a\cdot cb$
\cite{MIM}. 

\item --- self-dual AG-groupoid, if $a\cdot bc=c\cdot ba$ \cite{SM}
.

\item --- left distributive (LD) AG-groupoid, if $a\cdot bc=ab\cdot ac$
\cite{SM}. 

\item --- right distributive (RD) AG-groupoid, if $ab\cdot c=ac\cdot bc$
\cite{SM}.

\item --- an AG$^{**}$-groupoid if it satisfies the identity $a(bc)=b(ac)$
\cite{SM}.

\item --- flexible AG-groupoid if it satisfies the identity $a(ba)=(ab)a$\cite{SM}.

\item --- paramedial AG-groupoid if it satisfies the identity $ab.cd=db.ca$
\cite{RJT}.

\item --- medial AG-groupoid if it satisfies the identity $ab.cd=ab.cd$
\cite{RJT}.

\item --- right commute AG-groupoid if it satisfies the identity$a\cdot bc=a\cdot cb$.

\noindent \end{enumerate}

\begin{table}[h]
\begin{centering}
\begin{tabular}{|l|c|c|c|c|}
\hline 
Order & ~~~3~~~ & ~~~4~~~  & 5 & 6\tabularnewline
\hline 
\hline 
Total (AG-groupoids) & 20 & 331 & 31913 & 40104513\tabularnewline
\hline 
Non associative RAD AG-groupoids & 6 & 175 & 21186 & 34539858\tabularnewline
\hline 
Non associative LAD AG-groupoids & 0 & 0 & 27 & 1106\tabularnewline
\hline 
Non associative AD AG-groupoids & 0 & 0 & 0 & 0\tabularnewline
\hline 
\end{tabular}
\par\end{centering}

\bigskip{}

\caption{Enumeration of RAD \& LAD AG-groupoids up to order $6$.}
\end{table}

\section{\textbf{Left Abelian Distributive AG-groupoids}}

\begin {definition} An AG-groupoid $S$ is called left abelian distributive
AG-groupoid, denoted by LAD if $\forall a,b,c\in S,$
\begin{eqnarray}
a\cdot bc & = & ab\cdot ca\label{eq:a-1}
\end{eqnarray}

\noindent \end {definition}

\noindent \begin {example} Let $S=\{1,2,3,4\}$ with the following
table. Then it is easy to verify that $S$ is LAD AG-groupoid. \end {example}

\begin{center}
\begin{tabular}{c|cccc}
{*} & 1 & 2 & 3 & 4\tabularnewline
\hline 
1 & 1 & 1 & 1 & 1\tabularnewline
2 & 1 & 1 & 1 & 1\tabularnewline
3 & 1 & 1 & 1 & 2\tabularnewline
4 & 1 & 1 & 1 & 3\tabularnewline
\end{tabular}
\par\end{center}

\section{\textbf{Left Abelian Distributive AG-test}}

In this section we discuss a procedure of \cite{PN} that whether
an AG-groupoid $(G,\cdot)$ is LAD-AG-groupoid or not, for this we
define the following binary operation
\begin{eqnarray}
a\circledcirc b & = & ab\cdot xa\label{eq:c}\\
a\circ b & = & a\cdot bx
\end{eqnarray}
The law $a\cdot bx=ab\cdot xa$ is satisfied if,
\begin{eqnarray}
a\circledcirc b & = & a\circ b\label{eq:d}
\end{eqnarray}

To construct the table of operation $``\circledcirc"$ for any fixed
$x\in G.$ We rewrite $x$-row of the $``\cdot"$ table as an index
row for the new extended table. Multiply corresponding elements of
each row of the original table by the corresponding element of the
each index row for the extended table. Similarly the table of the
operation $``\circ"$ for any fixed $x\in G$ is obtained by rewriting
the $x$-column of $``\cdot"$ table as an index colum and multiplying
it by the elements of the index row from the left. If the tables obtained
for the operation $``\circledcirc"$ and $``\circ"$ coincides for
all $x\in G,$ then the equation \ref{eq:a-1} holds and the AG-groupoid
is an LAD-AG-groupoid in that case. We illustrate the procedure with
the following example.

$\newline$

\noindent \begin{example} Verify the AG-groupoid $G$ with the following
table for LAD- AG-groupoid.\end{example}\medskip{}

\noindent \begin{center}
\begin{tabular}{|c|ccccc|}
\hline 
$\cdot$ & 1 & 2 & 3 & 4 & 5\tabularnewline
\hline 
1 & 1 & 1 & 1 & 1 & 1\tabularnewline
2 & 1 & 2 & 2 & 2 & 2\tabularnewline
3 & 1 & 2 & 2 & 2 & 2\tabularnewline
4 & 1 & 2 & 2 & 2 & 3\tabularnewline
5 & 1 & 2 & 2 & 2 & 4\tabularnewline
\hline 
\end{tabular}
\par\end{center}

Extend the above table in the way as described above we get the following
table, where the tables on right to the original table is for the
operation $``\circledcirc"$ and the tables that lies downwards of
the original tables are constructed for the operation $``\circ"$.

\begin{figure}
\noindent \centering{}\includegraphics[width=13cm]{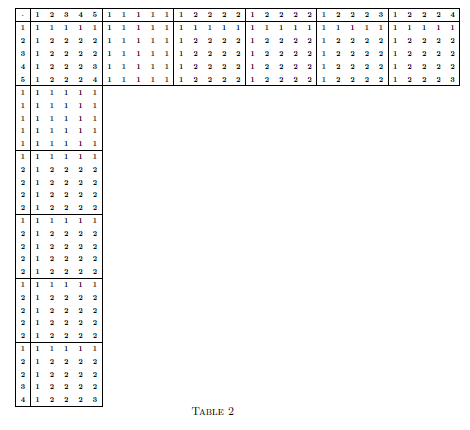}
\end{figure}

\noindent \begin{flushleft}
It is clear from the extended table that the downward tables and
the tables on the right coincide so $G$ is LAD-AG-groupoid.
\par\end{flushleft}

\section{Characterization of LAD-AG-groupoid}

\noindent \begin{theorem} \label{Lem1}Let S be an LAD-AG-groupoid.
then the following hold.

\noindent $S$ is RC-AG-groupoid

\begin{enumerate}[(i)]

\item $S$ is right commutative (RC) AG-groupoid;

\item $S$ is self-dual AG-groupoid;

\item $S$ is AG$^{**}$-groupoid;

\item $S$ is left distributive AG-groupoid.

\end{enumerate}\end{theorem}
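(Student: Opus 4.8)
The four assertions are all equational identities, so the plan is to derive each one by a short chain of rewritings, using only three tools: the left invertive law (\ref{eq:a}), the medial law $ab\cdot cd=ac\cdot bd$ (which, as recalled in the Introduction, holds in every AG-groupoid), and the defining identity (\ref{eq:a-1}) of an LAD-AG-groupoid, together with whichever of (i)--(iv) have already been established. It is convenient to prove them not in the stated order but in the order (i), (iv), (iii), (ii): the proof of (iv) will call on (i), and (ii) will fall out cheaply once (iii) is available. Note in passing that, given (i), the properties (ii) and (iii) are interchangeable, so it suffices to secure either one of them directly.

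For (i), I would write $a\cdot bc=ab\cdot ca=ac\cdot ba=a\cdot cb$: the first equality is (\ref{eq:a-1}), the middle one is the medial law applied to $ab\cdot ca$ (taking the second pair to be $c,a$), and the last is (\ref{eq:a-1}) with $b$ and $c$ interchanged. For (iv), starting again from (\ref{eq:a-1}) I would write $a\cdot bc=ab\cdot ca=ab\cdot ac$, where the final step is right commutativity (i) applied with left factor $ab$, i.e. $(ab)(ca)=(ab)(ac)$; this is exactly the left distributive law. Both derivations are routine manipulations.

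The real work is (iii). Feeding (iv) into both sides of $a\cdot bc=b\cdot ac$ reduces it to $ab\cdot ac=ba\cdot bc$, and one further application of the medial law to each side rewrites this as $(aa)(bc)=(bb)(ac)$. The plan is to establish this last identity. The difficulty I anticipate is that the purely \emph{symmetric} tools---left invertion, the medial law, and (if one first verifies it holds here) the paramedial law---merely permute the four letters $a,a,b,c$ and invariably lead back to the same equation, so they cannot on their own break the $a\leftrightarrow b$ asymmetry that the identity demands. The one genuinely asymmetric consequence at hand is the left-square collapse $a\cdot bc=(aa)(bc)$, which itself follows from (iv) and the medial law. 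I therefore expect the crux to be a carefully chosen \emph{double} application of (\ref{eq:a-1}) that exposes a left square on one side, so that this collapse can absorb it and interchange the roles of the two squares $aa$ and $bb$. Finding precisely that substitution is the main obstacle of the whole theorem.

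Finally, (ii) is immediate once (iii) and (i) are in hand: applying AG$^{**}$, then right commutativity, then AG$^{**}$ again gives $c\cdot ba=b\cdot ca=b\cdot ac=a\cdot bc$, so $a\cdot bc=c\cdot ba$, which is self-duality.
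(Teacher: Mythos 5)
Your parts (i) and (iv) are correct. Part (i) coincides exactly with the paper's own derivation ($a\cdot bc=ab\cdot ca=ac\cdot ba=a\cdot cb$). Your part (iv) is sound and in fact shorter than the paper's: the paper proves left distributivity without using (i), via $a\cdot bc=ab\cdot ca=(ab\cdot c)(a\cdot ab)=(ab\cdot a)(c\cdot ab)=ab\cdot ac$, whereas you get it in one stroke by applying right commutativity with left factor $ab$. Your observation that, once (i) is available, (ii) and (iii) are interderivable is also correct, and your one-line deduction of (ii) from (i) and (iii) is valid.

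The genuine gap is that neither (ii) nor (iii) is ever actually proved. You correctly reduce (iii) to the identity $aa\cdot bc=bb\cdot ac$, but then state that finding the decisive substitution ``is the main obstacle of the whole theorem''---that is a diagnosis of the missing step, not a proof of it, and everything you say about (ii) is conditional on it. This missing step is precisely where the paper's real work lies: the paper proves (ii) \emph{directly} by a long chain (and then obtains (iii) from (i) and (ii), the reverse of your ordering), namely $a\cdot bc=ab\cdot ca=(ab\cdot c)(a\cdot ab)=(ab\cdot c)(a\cdot ba)=(ab\cdot a)(c\cdot ba)=(ab\cdot a)(cb\cdot ac)=\cdots=(ac)(ba\cdot a)=c\cdot ba$, roughly fifteen steps in which the Identity \ref{eq:a-1} is repeatedly applied to \emph{compound} triples such as $ab,\,c,\,a$, interleaved with the medial law, (i), and the left invertive law \ref{eq:a}; this is exactly the ``carefully chosen double application'' your plan anticipates but does not exhibit. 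Be warned also that the missing computation is delicate rather than routine: in the paper's first example (the LAD-groupoid on $\{1,2,3,4\}$) one has $b\cdot aa\neq aa\cdot b$ for $a=b=4$, so tempting intermediate identities fail pointwise---indeed the paper's own chain contains the step $(ac)(b\cdot aa)=(ac)(aa\cdot b)$, whose stated justification suffers from exactly this problem even though the equality holds in context. Until you supply an explicit derivation of $aa\cdot bc=bb\cdot ac$ (or of (ii) or (iii) directly) from \ref{eq:a-1}, the medial law and \ref{eq:a}, the theorem is only half proved.
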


\noindent \emph{Proof.} Let $S$ be an LAD-AG-groupoid and $a,b,c\in S.$
Then

\begin{enumerate}[(i)]

\item Using the Identity \ref{eq:a-1} and the medial law, we have
$a\cdot bc=ab\cdot ca=ac\cdot ba=a\cdot cb$. Thus $S$ is RC-AG-groupoid.

\item \label{lem 2-1} Using the medial law and the Identity \ref{eq:a-1},
and Lemma \ref{Lem1} (i), we get, 
\begin{eqnarray*}
a\cdot bc & = & ab\cdot ca=(ab\cdot c)(a\cdot ab)=(ab\cdot c)(a\cdot ba)=(ab\cdot a)(c\cdot ba)=\\
 & = & (ab\cdot a)(cb\cdot ac)=(ab\cdot cb)(a\cdot ac)=(ab\cdot cb)(aa\cdot ca)=\\
 & = & (ab\cdot aa)(cb\cdot ca)=(ab\cdot aa)(cb\cdot ac)=(a\cdot ba)(c\cdot ba)=\\
 & = & (ac)(ba\cdot ba)=(ac)(ba\cdot ab)=(ac)(b\cdot aa)=(ac)(aa\cdot b)=\\
 & = & (ac)(ba\cdot a)=c\cdot ba\Rightarrow a\cdot bc=c\cdot ba.
\end{eqnarray*}
Hence $S$ is self-dual AG-groupoid.

\item Using the Identity \ref{eq:a-1}, medial law and Theorem (\ref{Lem1}
(i)), we get

\begin{center}
$a\cdot bc=a\cdot cb=b\cdot ca=bc\cdot ab=ba\cdot cb=b\cdot ac.$
\par\end{center}

\noindent Hence $S$ is AG$^{**}$-groupoid.

\noindent \item The Identity \ref{eq:a-1} and medial law imply that
\begin{eqnarray*}
a\cdot bc & = & ab\cdot ca=(ab\cdot c)(a\cdot ab)\\
 & = & (ab\cdot a)(c\cdot ab)=ab\cdot ac\\
\Rightarrow a\cdot bc & = & ab\cdot ac.
\end{eqnarray*}
Hence $S$ is left distributive AG-groupoid.\qed 

\end{enumerate}

\noindent \begin {corollary}\label{cor 1} Every LAD-AG-groupoid
is paramedial AG-groupoid and hence is a left nuclear square AG-groupoid.\end {corollary}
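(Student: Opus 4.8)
The plan is to establish the two assertions in turn, since the second (``left nuclear square'') is advertised as a consequence of the first (paramedial). For the paramedial law $ab\cdot cd=db\cdot ca$, I would not attack the equality head-on but instead reduce each side \emph{independently} to a single canonical triple product, by repeated use of the self-dual and AG${}^{**}$ identities from Theorem \ref{Lem1}, and only then reconcile the two canonical forms using right commutativity.

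Concretely, I would first process the left-hand side. Reading $ab\cdot cd$ as $x\cdot yz$ with $x=ab$, $y=c$, $z=d$, self-duality (Theorem \ref{Lem1}(ii)), $a\cdot bc=c\cdot ba$, gives $ab\cdot cd=d\cdot(c\cdot ab)$. Applying self-duality again to the inner factor $c\cdot ab$ yields $d\cdot(b\cdot ac)$; the AG${}^{**}$ law (Theorem \ref{Lem1}(iii)), $a(bc)=b(ac)$, then swaps the two outer factors to give $b\cdot(d\cdot ac)$; and a final self-dual step on $d\cdot ac$ produces $ab\cdot cd=b\cdot(c\cdot ad)$. Running the identical sequence of moves on the right-hand side $db\cdot ca$ produces $db\cdot ca=b\cdot(c\cdot da)$.

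At this point the two sides differ only in the inner product $c\cdot ad$ versus $c\cdot da$, and right commutativity (Theorem \ref{Lem1}(i)), $a\cdot bc=a\cdot cb$, identifies them. Chaining the equalities then gives $ab\cdot cd=db\cdot ca$, so $S$ is paramedial. The main bookkeeping obstacle here is to choose, at each stage, the rewrite that steers both sides toward the \emph{same} outer bracketing, with outermost factors $b$ then $c$; once that common target is fixed, the individual rewrites are essentially forced and no creative step remains.

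For the final clause I would deduce that $S$ is a left nuclear square AG-groupoid, i.e. that every square lies in the left nucleus, $a^{2}\cdot bc=a^{2}b\cdot c$, directly from paramediality together with the medial and left invertive laws, both of which hold in every AG-groupoid. Starting from $a^{2}\cdot bc=(aa)(bc)$, paramediality gives $(ca)(ba)$, and the medial law $ab\cdot cd=ac\cdot bd$ rearranges this to $(cb)(aa)=cb\cdot a^{2}$. On the other side, the left invertive law applied to $a^{2}b\cdot c=((aa)b)c$ gives $(cb)(aa)=cb\cdot a^{2}$ as well. Equating the two computations yields $a^{2}\cdot bc=a^{2}b\cdot c$, which is exactly the left nuclear square property, completing the proof. \qed
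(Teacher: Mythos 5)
Your proof is correct, and it is worth noting that the paper itself supplies no argument at all for this corollary: it is stated as an immediate consequence of Theorem \ref{Lem1}, the implicit route being that $S$ is AG${}^{**}$ by Theorem \ref{Lem1}(iii), that every AG${}^{**}$-groupoid is known to be paramedial, and that every paramedial AG-groupoid is known to be left nuclear square (results available in the cited literature). Your version makes all of this explicit, and every step checks out: the reduction of $ab\cdot cd$ and $db\cdot ca$ to the common shape $b\cdot(c\cdot{-})$ via self-duality and AG${}^{**}$, the final reconciliation $c\cdot ad=c\cdot da$ by right commutativity, and the left nuclear square computation $a^{2}\cdot bc=(ca)(ba)=(cb)a^{2}=(a^{2}b)c$ via paramediality, mediality and the left invertive law. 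The only remark is that your first half is longer than necessary: from AG${}^{**}$ and the left invertive law alone one gets paramediality in three moves,
\begin{equation*}
(ab)(cd)=c\bigl((ab)d\bigr)=c\bigl((db)a\bigr)=(db)(ca),
\end{equation*}
using $x(yz)=y(xz)$ twice and $(xy)z=(zy)x$ once, with no appeal to self-duality or right commutativity. So your argument trades brevity for self-containedness; what it buys is a proof that never invokes any fact external to Theorem \ref{Lem1} and the two laws (medial, left invertive) valid in every AG-groupoid, which is arguably a service the paper omits.
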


It is worth mentioning that the concepts of LD and LAD for AG-groupoids
are different. To this end we give an example of LD-AG-groupoid which
is not LAD-AG-groupoid.

\noindent \begin{flushleft}
\begin {example}Let $S=\{1,2,3,4\}$ with the following table. Then
one can easily verify that S is an LD-AG-groupoid. Since $1\cdot24\neq12\cdot41$
thus $S$ is not an LAD-AG-groupoid. \end {example}
\par\end{flushleft}

\begin{center}
\begin{tabular}{c|cccc}
{*} & 1 & 2 & 3 & 4\tabularnewline
\hline 
1 & 1 & 3 & 4 & 2\tabularnewline
2 & 4 & 2 & 1 & 3\tabularnewline
3 & 2 & 4 & 3 & 1\tabularnewline
4 & 3 & 1 & 2 & 4\tabularnewline
\end{tabular}
\par\end{center}

\section{\textbf{RAD-AG-groupoid}}

\begin {definition} An AG-groupoid $S$ is called right abelian distributive
(or shortly RAD) AG-groupoid if $\forall a,b,c\in S,$
\begin{eqnarray}
ab\cdot c & = & ca\cdot bc\label{eq:b-1}
\end{eqnarray}

\noindent \end {definition}

\noindent \begin {example} Let $S=\{a,b,c\}$ with the following
table then one can easily verify that S is an RAD AG-groupoid.

\begin{center}
\begin{tabular}{c|ccc}
{*} & a & b & c\tabularnewline
\hline 
a & a & a & a\tabularnewline
b & a & a & a\tabularnewline
c & a & b & a\tabularnewline
\end{tabular}
\par\end{center}

\end {example}

\section{\textbf{RAD-AG-Test}}

In this section we discuss the procedure that how to check an arbitrary
AG-groupoid $(G,\cdot)$ for an RAD-AG-groupoid, for this we define
the following binary operation
\begin{eqnarray}
a\circ b & = & xa\cdot bx\label{eq:e}\\
a\diamondsuit b & = & ab\cdot x\label{eq:f}
\end{eqnarray}
The law $ab\cdot x=xa\cdot bx$ is satisfied if;
\begin{eqnarray}
a\circ b & = & a\diamondsuit b\label{eq:g}
\end{eqnarray}
To construct table of operation $``\circ"$ for any fixed $x\in G,$
rewriting $x$-row of the $``\cdot"$ table as an index row of the
new extended table and multiply its elements by the elements of the
$x$-column of $``\cdot"$ table turn by turn to list the rows of
$``\circ"$ tables. 

Similarly the tables of the operation $``\diamondsuit"$ for any fixed
$x\in G$ is constructed by multiplying a fixed element $x\in G$
by elements of the $``\cdot"$ table from the left. If the tables
obtained for the operation $``\circ"$ and $``\diamondsuit"$ coincide
for all $x\in G,$ then \ref{eq:b-1} holds, and the AG-groupoid is
an RAD-AG-groupoid.

\noindent \begin{example} Check the following AG-groupoid $G=\{1,2,3\}$
for RAD-AG-groupoid.\end{example}

\noindent \begin{center}
\begin{tabular}{|c|ccc|}
\hline 
$\cdot$ & 1 & 2 & 3\tabularnewline
\hline 
1 & 1 & 1 & 1\tabularnewline
2 & 1 & 1 & 1\tabularnewline
3 & 2 & 2 & 2\tabularnewline
\hline 
\end{tabular}
\par\end{center}

Extend the above table in the way as described above we get the following: 

\noindent \begin{center}
\begin{tabular}{|c|ccc|ccccccccc}
\hline 
$\cdot$ & 1 & 2 & 3 & 1 & 1 & \multicolumn{1}{c|}{1} & 1 & 1 & \multicolumn{1}{c|}{1} & 2 & 2 & \multicolumn{1}{c|}{2}\tabularnewline
\hline 
1 & 1 & 1 & 1 & 1 & 1 & \multicolumn{1}{c|}{1} & 1 & 1 & \multicolumn{1}{c|}{1} & 1 & 1 & \multicolumn{1}{c|}{1}\tabularnewline
2 & 1 & 1 & 1 & 1 & 1 & \multicolumn{1}{c|}{1} & 1 & 1 & \multicolumn{1}{c|}{1} & 1 & 1 & \multicolumn{1}{c|}{1}\tabularnewline
3 & 2 & 2 & 2 & 1 & 1 & \multicolumn{1}{c|}{1} & 1 & 1 & \multicolumn{1}{c|}{1} & 1 & 1 & \multicolumn{1}{c|}{1}\tabularnewline
\hline 
 & 1 & 1 & 1 &  &  &  &  &  &  &  &  & \tabularnewline
1 & 1 & 1 & 1 &  &  &  &  &  &  &  &  & \tabularnewline
 & 1 & 1 & 1 &  &  &  &  &  &  &  &  & \tabularnewline
\cline{1-4} 
 & 1 & 1 & 1 &  &  &  &  &  &  &  &  & \tabularnewline
2 & 1 & 1 & 1 &  &  &  &  &  &  &  &  & \tabularnewline
 & 1 & 1 & 1 &  &  &  &  &  &  &  &  & \tabularnewline
\cline{1-4} 
 & 1 & 1 & 1 &  &  &  &  &  &  &  &  & \tabularnewline
3 & 1 & 1 & 1 &  &  &  &  &  &  &  &  & \tabularnewline
 & 1 & 1 & 1 &  &  &  &  &  &  &  &  & \tabularnewline
\cline{1-4} 
\end{tabular}
\par\end{center}

It is clear from the extended table that the downward tables for the
operation $``\diamondsuit"$ and the tables on the right for the operation
$``\circ"$ coincide so $G$ is an RAD-AG-groupoid.

\begin {theorem}Every RAD-AG-groupoid is right distributive AG-groupoid.\end {theorem}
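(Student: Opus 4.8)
The plan is to reduce the right-distributive law $ab\cdot c = ac\cdot bc$ directly to the RAD identity $ab\cdot c = ca\cdot bc$ by showing that the two candidate right-hand sides $ca\cdot bc$ and $ac\cdot bc$ coincide. The only tools I would use are the left invertive law (the defining identity of an AG-groupoid), the medial law $ab\cdot cd = ac\cdot bd$ which holds in every AG-groupoid as noted in the Introduction, and the RAD identity itself.

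First I would isolate an auxiliary identity: in any RAD-AG-groupoid one has $pq\cdot r = qp\cdot r$ for all $p,q,r$, that is, the first two factors of a left-nested product may be interchanged without changing the value. To obtain this I would run the chain $pq\cdot r = rp\cdot qr$ (RAD) $= rq\cdot pr$ (medial) $= qp\cdot r$ (RAD again, read in the reverse direction). The middle step is a single application of the medial law, while the two outer steps use the RAD identity in its two directions; the decisive point is to recognise $rq\cdot pr$ as the right-hand side of the RAD identity with the variables suitably permuted, so that it collapses back to a left-nested triple product.

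With this swap identity in hand the theorem follows in one line: starting from $ab\cdot c = ca\cdot bc$ by RAD, I would apply the swap identity to the factor $ca\cdot bc$, interchanging $c$ and $a$ in the left pair (taking $p=c$, $q=a$, $r=bc$), to get $ca\cdot bc = ac\cdot bc$. Hence $ab\cdot c = ac\cdot bc$, which is precisely the right-distributive law.

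I expect the genuine obstacle to be the derivation of the auxiliary swap identity rather than the final deduction, since it forces the RAD identity to be used in its non-obvious backward direction, matching a medial-rearranged product against the pattern $ca\cdot bc$. A purely mechanical attempt to massage $ac\cdot bc$ with medial and left invertive alone will not succeed: medial already gives $ac\cdot bc = ab\cdot cc$, so right-distributivity is equivalent to $ab\cdot c = ab\cdot cc$, an identity that genuinely fails in arbitrary AG-groupoids and must therefore consume the RAD hypothesis somewhere. Pinning down where RAD enters, namely in both the first and the last step of the swap chain, is the crux of the argument.
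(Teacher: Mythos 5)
Your proof is correct and is essentially the paper's own argument: the paper's chain $ab\cdot c = ca\cdot bc = (bc\cdot c)(a\cdot bc) = (bc\cdot a)(c\cdot bc) = ac\cdot bc$ is exactly your swap chain (RAD, medial, RAD read backwards) instantiated at $p=c$, $q=a$, $r=bc$. Packaging those three steps as a standalone swap lemma $pq\cdot r = qp\cdot r$ is a tidy reorganization, but the computational content is identical.
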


\begin {proof}Let $S$ be an RAD-AG-groupoid, and let $a,b,c\in S.$
Then by the Identity \ref{eq:b-1} and medial law, we get 
\begin{eqnarray*}
ab\cdot c & = & ca\cdot bc=(bc\cdot c)(a\cdot bc)\\  & = &(bc\cdot a)(c\cdot bc)=ac\cdot bc\\ \Rightarrow ab\cdot c & = & ac\cdot bc.
\end{eqnarray*}
Hence $S$ is right distributive AG-groupoid.\end {proof}

The converse of the previous thorem is not valid. The following example
show that not every RD-AG-groupoid is an RAD-AG-groupoid.

\begin {example}Let $S=\{a,b,c,d\}$ with the following operation.
Then one can easily verify that S is an RD-AG-groupoid, but is not
an RAD-AG-groupoid.

\begin{center}
\begin{tabular}{c|cccc}
{*} & a & b & c & d\tabularnewline
\hline 
a & a & c & d & b\tabularnewline
b & d & b & a & c\tabularnewline
c & b & d & c & a\tabularnewline
d & c & a & b & d\tabularnewline
\end{tabular}
\par\end{center}

\end {example}

\section{\textbf{Abelian Distributive AG-groupoid}}

\begin {definition}An AG-groupoid $S$ is called abelian distributive
AG-groupoid denoted by AD-AG-groupoid if it is both RAD and LAD AG-groupoids.\end {definition}

The following result confirms that non-associative AD-AG-groupoids
do not exist.

\begin {theorem}Every AD-AG-groupoid is a semigroup.\end {theorem}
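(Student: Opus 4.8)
The plan is to establish associativity $(ab)c = a(bc)$ by deriving two complementary ``square-absorption'' identities --- one on each side of a product --- and then collapsing a squared factor. Since an AD-AG-groupoid $S$ is simultaneously LAD and RAD, I may freely use the left invertive law (\ref{eq:a}), mediality (valid in every AG-groupoid), left distributivity and paramediality (from Theorem~\ref{Lem1} and Corollary~\ref{cor 1}, as $S$ is LAD), and right distributivity (from the theorem that every RAD-AG-groupoid is right distributive, as $S$ is RAD).

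First I would record the right-hand identity $(ab)c = (ab)c^2$. This follows from one application of right distributivity, $(ab)c = (ac)(bc)$, followed by the medial law, $(ac)(bc) = (ab)(cc) = (ab)c^2$. The point is that a squared factor sitting at the far right of a left-nested product can be inserted or deleted at will.

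Next I would transform $a(bc)$ into a shape that exposes a single square. Using left distributivity, $a(bc) = (ab)(ac)$; the medial law then gives $(ab)(ac) = (aa)(bc) = a^2(bc)$; applying paramediality to $(aa)(bc)$ produces $(ca)(ba)$; and a final use of the medial law rearranges this to $(cb)(aa) = (cb)a^2$. Thus $a(bc) = (cb)a^2$.

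Finally I would collapse the square and conclude. By the identity from the first step, read with $c,b,a$ playing the roles of $a,b,c$, one has $(cb)a^2 = (cb)a$, and the left invertive law gives $(cb)a = (ab)c$. Chaining these equalities yields $a(bc) = (cb)a^2 = (cb)a = (ab)c$, which is associativity; hence $S$ is a semigroup. The crux of the argument, and the main obstacle, is the middle paragraph: recognizing that paramediality together with mediality lets one migrate the repeated factor $a$ out of the interior of $a(bc)$ onto a single squared slot $a^2$, precisely the position that the RAD-derived identity $(ab)c = (ab)c^2$ is able to absorb. Once this alignment is spotted the computation is short, and the only real risk is bookkeeping in the index substitutions of the medial and paramedial laws.
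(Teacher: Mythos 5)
Your proof is correct, but it takes a genuinely different route from the paper's. The paper dispatches the theorem in a single four-step chain using nothing beyond the two defining identities, mediality, and the left invertive law: $a\cdot bc=ab\cdot ca$ (LAD), $=ac\cdot ba$ (medial), $=cb\cdot a$ (RAD read right-to-left, with $c,b,a$ in the roles of $a,b,c$), $=ab\cdot c$ (left invertive law). You instead route the argument through the derived subclass properties --- right distributivity from RAD, left distributivity and paramediality from LAD --- to manufacture the two square-absorption identities $(ab)c=(ab)c^{2}$ and $a(bc)=(cb)a^{2}$, and then collapse the square. Each of your steps checks out: $(ab)c=(ac)(bc)=(ab)(cc)$ by RD and mediality; $a(bc)=(ab)(ac)=(aa)(bc)=(ca)(ba)=(cb)(aa)$ by LD, mediality, paramediality, and mediality again; and finally $(cb)a^{2}=(cb)a=(ab)c$ by the first identity (with $c,b,a$ substituted) and the left invertive law. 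What your approach buys is modularity: it exhibits associativity as a consequence of the structure theorems already proved in the paper, and the absorption identities themselves have some independent interest. What it costs is length and an extra dependency --- you need Corollary~\ref{cor 1} (paramediality), which the paper states without an explicit proof --- whereas the paper's chain is shorter, self-contained, and works directly from the definitions.
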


\begin {proof}Let $S$ be an AD-AG-groupoid, and let $a,b,c\in S.$
Then by identities (\ref{eq:a-1},\ref{eq:b-1}, \ref{eq:a}), and
medial law we get, 
\begin{eqnarray*}
a\cdot bc & = & ab\cdot ca=ac\cdot ba=cb\cdot a=ab\cdot c\Rightarrow a\cdot bc=ab\cdot c.
\end{eqnarray*}
Hence $S$ is a semigroup.\end {proof}

\end{document}